\documentclass[a4paper,12pt]{article}

\usepackage{amscd}
\usepackage{amsfonts}
\usepackage{amsmath}
\usepackage{amssymb}
\usepackage{amsthm}
\usepackage[T1]{fontenc} % changing encode
\usepackage{here} % [h] for tables
\usepackage{mathrsfs} % \mathscr
\usepackage{txfonts} % colneqq
\usepackage[all]{xy} % xypic
\usepackage{algorithm} % pseudocode
\usepackage{algpseudocode} %pseudocode
\usepackage{diagbox} % \diagbox

\allowdisplaybreaks

\theoremstyle{plain}
\newtheorem{thm}{Theorem}[section]

\newtheorem{prp}[thm]{Proposition}
\newtheorem{crl}[thm]{Corollary}
\theoremstyle{definition}

\newtheorem{exm}[thm]{Example}

\newcommand{\vs}[1][0.2]{\vspace{#1in}\noindent\ignorespaces}
\newcommand{\ba}{\begin{array*}}
\newcommand{\ea}{\end{array*}}
\newcommand{\be}{\begin{eqnarray*}}
\newcommand{\ee}{\end{eqnarray*}}
\newcommand{\bi}{\begin{itemize}}
\newcommand{\ei}{\end{itemize}}
\newcommand{\bb}{\vs\begin{itembox}}
\newcommand{\eb}{\end{itembox}}
\newcommand{\bc}{\begin{center}}
\newcommand{\ec}{\end{center}}
\newcommand{\bs}{\vs\begin{screen}}
\newcommand{\es}{\end{screen}}

\def\ens#1{{\mathchoice{\left\{ #1 \right\}}{\{ #1 \}}{\{ #1 \}}{\{ #1 \}}}}
\def\set#1#2{{\mathchoice{\left\{ #1 \middle| #2 \right\}}{\{ #1 \mid #2 \}}{\{ #1 \mid #2 \}}{\{ #1 \mid #2 \}}}}
\def\r#1{\text{\rm #1}}

\def\Bigv#1{\left| #1 \right|}
\def\v#1{{\mathchoice{\Bigv{#1}}{| #1 |}{| #1 |}{| #1 |}}}
\def\Bign#1{\left\| #1 \right\|}
\def\n#1{{\mathchoice{\Bign{#1}}{\| #1 \|}{\| #1 \|}{\| #1 \|}}}

\newcommand{\bA}{\mathbb{A}}

\newcommand{\bC}{\mathbb{C}}

\newcommand{\bN}{\mathbb{N}}

\newcommand{\bR}{\mathbb{R}}

\newcommand{\bZ}{\mathbb{Z}}

\newcommand{\rA}{\r{A}}

\newcommand{\rC}{\r{C}}

\newcommand{\C}{\bC}

\newcommand{\N}{\bN}

\newcommand{\R}{\bR}
\newcommand{\Z}{\bZ}

\newcommand{\Fp}{\mathbb{F}_p}

\newcommand{\Qp}{\mathbb{Q}_p}

\newcommand{\Zp}{\mathbb{Z}_p}

\newcommand{\Teichmuller}{Teichm\"uller }

\newcommand{\AD}{\r{AD}}
\newcommand{\IO}{\r{IO}}
\newcommand{\LS}{\r{LS}}
\newcommand{\opt}{\r{opt}}
\newcommand{\temp}{\r{temp}}

\algnewcommand\algorithmicbreak{{\bf break}}
\algnewcommand\Break{\algorithmicbreak{}}
\algnewcommand\algorithmiccontinue{{\bf continue}}
\algnewcommand\Continue{\algorithmiccontinue{}}

\title{$p$-adic Principal Component Analysis}
\author{Tomoki Mihara}
\date{}

\begin{document}

\maketitle
%\address
\begin{abstract}
We formulate a $p$-adic optimisation problem on matrix factorisation, and investigate a heuristic method for it analogous to PCA.
\end{abstract}

\tableofcontents
%\fn{11S80}{}

\section{Introduction}
\label{Introduction}

Principal component analysis (PCA) is a dimensionality reduction method based on linear algebra over $\R$, and is a useful tool to analyse continuous real variables. Although there are several techniques to directly analyse categorical variables, e.g.\ $\chi^2$ test, multiple correspondence analysis (MCA), and polychoric correlation, PCA is sometimes naively applied to them by embedding the set $C$ of levels into a Euclidean space. Then the resulting components might be a virtual quantity presenting formal real combinations of levels and not realised as categorical variables in $C$.

\vs
Especially when $C$ admits an algebraic structure, e.g.\ $C = \ens{0,1}$ equipped with boolean operations or $C = \Z/n \Z$ for an $n \in \N$ equipped with modular arithmetic, the resulting components lack the information of the original operation, unless the embedding reflects the structure in some way. When we are interested in the algebraic structure, then it is desired if we can embed $C$ into a space $\bA$ other than a Euclidean space in a way reflecting the algebraic structure to some extent and PCA works even for $\bA$.

\vs
Let $p$ be a prime number. In this paper, we formulate PCA for the case $\bA$ is $\Qp^D$ or $\Zp^D$ for a $D \in \N$, where $\Qp$ denotes the field of $p$-adic numbers and $\Zp$ denotes the ring of $p$-adic integers. Since $\Qp^D$ and $\Zp^D$ have algebraic structures quite different to that of a Euclidean space, $C$ might admit a good embedding to them or a direct product for multiple $p$'s reflecting the algebraic structure to some extent. For example, when $C = \ens{0,1}$ equipped with boolean operators, then \Teichmuller embedding $\Fp \hookrightarrow \Zp$ for $p = 2$ can be a good candidate, where $\Fp$ denote the finite field of integers modulo $p$.

\vs
The notion of $p$-adic numbers is originally introduced by K.\ Hensel in 1897 in \cite{Hen97}, and plays a central role in modern number theory. Recently, the $p$-adic numbers are used also in other branches of science, because of many significant similarities to and differences from the real numbers and the complex numbers. See Introduction of \cite{Mih26} for details. For the reader's convenience, we give a duplicated explanation here.

\vs
The $p$-adic numbers also appear in computer science. For example, S.\ Albeverio, A.\ Khrennikov, and B.\ Tirrozi studied $p$-adic neural network in \cite{AKT99} and \cite{KT00}, P.\ E.\ Bradley studied dendrograms and clusterings using $p$-adic numbers in \cite{Bra08} and \cite{Bra09}, and so on. Introduction of \cite{Bra25} explains the history well. Especially, $p$-adic equations and $p$-adic optimisation related to $p$-adic neural networks are recently studied as a frontier topic (cf.\ \cite{ZZ23}, \cite{ZZB24}, \cite{BMP25}, \cite{Zub25-1}, \cite{Zub25-2}, \cite{Ngu25}, \cite{Mih26}).

\vs
Although the $p$-adic numbers share many common properties with the real numbers, there are not so many studies on $p$-adic optimisation unlike real optimisation. At least, linear algebraic methods independent of the coefficient field and combinatorial methods work also for $p$-adic optimisation. On the other hand, the lack of $p$-adic counterparts of real methods based on gradients prevents from inventing $p$-adic counterparts of real optimisation methods. Although Newton's method also works for multivariable polynomials in $\Qp$, it is not applicable to a $p$-adic optimisation problem for functions without zeros, e.g.\ $p$-adic linear regression or $p$-adic polynomial regression.

\vs
Another problem is that when we deal with a loss function $\epsilon$ of the form $X \to [0,\infty)$ for a subset $X$ of $\Qp$, the differential of $\epsilon$ does not naturally make sense. In order to differentiate a function $f$, we need to assume that the domain of $f$ and the codomain of $f$ admit ambient spaces with a compatible arithmetic structure. In the setting of $\epsilon$, the ambient space of $X$ is $\Qp$, while that of $[0,\infty)$ is $\R$. Since we do not have a definition of arithmetic of a pair of a $p$-adic number and a real number, the usual formulation fails.

\vs
We note that it is not usually useful to formally regard $p$-adic absolute values as $p$-adic numbers rather than real numbers in order to formulate the loss function as a $p$-adic function, because then the $p$-adic absolute value function is discontinuous at $0$ and is not bounded on $\Zp$.

\vs
In addition, even if we extend the notion of a differential so that it vanishes at any point where $\epsilon$ is locally constant, it frequently occurs that $\epsilon$ is locally constant at almost all points of the domain of $\epsilon$. Therefore, it is very difficult to make use of a differential to determine a small step in optimisation.

\vs
Another problem is that a symmetric matrix is not necessarily diagonalisable in the $p$-adic setting. Giving a good criterion of a $p$-adic matrix except for one applicable to a general field is itself a difficult problem. For example, see \cite{Mih16} Theorem 3.23 for the repetitive reduction method of a $p$-adic matrix based on the reduction $\Zp \twoheadrightarrow \Fp$ and Hensel lifting of idempotents, and \cite{Mih17} Theorem 3.16 for its extension to a unitary Banach $p$-adic representation of a topological monoid.

\vs
By the lack of the diagonalisability of a symmetric matrix, the standard PCA technique based on diagonalisation of the covariance matrix does not work in the $p$-adic setting. Moreover, we do not have a natural $p$-adic distribution which can play a role of significant non-uniform distributions such as normal distribution in the $p$-adic setting, and we mainly consider a uniform distribution on $\Zp$. Unlike a normal distribution in the real setting, the statistical average or mode of a uniform distribution on $\Zp$ is not so meaningful, and hence normalisation or standardisation, which is useful when we connect correlation to the inner product, does not work in this setting.

\vs
Furthermore, the $p$-adic standard inner product does not necessarily satisfy the non-degeneracy $\langle \vec{v}, \vec{v} \rangle = 0 \Leftrightarrow \vec{v} = \vec{0}$. Although there is a classical study of the $p$-adic Hilbert space structure given by the standard inner product by G.\ K.\ Kalisch (cf.\ \cite{Kal47}), it is applicable only to a subspace satisfying a very strong condition called {\it normality} only in the case where $p \neq 2$ and the base field is closed under square root like $\C$. Of course, $\Qp$ does not satisfy the condition of the base field.

\vs
As we have shown many problems on the $p$-adic inner product, it is very difficult to connect correlation to the inner product, and even the covariant matrix does not perform well in the $p$-adic setting. Instead, we introduce a $p$-adic counterpart of orthogonality based on the relation between perpendicular and the nearest point. Although the notion of $p$-adic orthogonality has ever formulated in terms of the $\ell^{\infty}$-norm direct sum (cf.\ \cite{BGR84} Definition 2.4.1/1) and an inner product (cf.\ \cite{Kal47} \S 3), we formulate it in another way because both are not applicable to our setting. Using the $p$-adic orthogonality, we introduce a $1$-dimensional projection and an orthogonalisation in the $p$-adic setting, and formulate a $p$-adic counterpart of PCA by iteration of $1$-dimensional projections and orthogonalisations.

\vs
The $p$-adic PCA gives a low rank approximation of a matrix given by matrix factorisation. Here, we consider the $\ell^q$-norm for a $q \in [1,\infty)$ when we formulate the loss function for the approximation. If we considered the $\ell^{\infty}$-norm, then dimensionality reduction by elementary divisor theory, i.e.\ matrix factorisation based on Smith normal form would work. Smith normal form is a normal form of a matrix over a PID, which is named after H.\ J.\ S.\ Smith for the study of linear equations (cf.\ \cite{Smi61}), and extended to wider classes of rings (cf.\ \cite{Sta16} Theorem 2.1 for references for various related results). Since computation of Smith normal form is executed in an elementary way similar to Gauss elimination, it would be better if we could apply it for our setting.

\vs
The reason why we consider the $\ell^q$-norm rather than the $\ell^{\infty}$-norm is because the use of $\ell^{\infty}$-norm prevents from application of the $p$-adic PCA to anomaly detection tasks in unsupervised settings. If we know that the $\ell^{\infty}$-norms of all anomaly data are always smaller than those of the majority of normal data, then the $\ell^{\infty}$-norm works well here. However, $\ell^{\infty}$-norms frequently coincide in the $p$-adic setting because they lie in a sparse subset of $\R$, and hence it is not practical to assume such a domination property. As long as we consider the $\ell^q$-norm, we can apply the $p$-adic PCA to anomaly detection tasks, as experimental results show.

\vs
There are plenty of preceding studies on binary matrix factorisation (BMF), which gives low rank approximation of a boolean matrix by matrix factorisation based on boolean operators such as the logical conjunction $\lor$ and the logical xor $\oplus$ (cf.\ \cite{BV10}, \cite{AV21}, \cite{MDLRB21}, \cite{FM23}). BMF gives dimensionality reduction of categorical data respecting the algebraic structure for the case where $C = \ens{0,1}$ equipped with boolean operations. 

\vs
Our study is not restricted to the case $p = 2$, and boolean data can also be regarded as a $p$-adic data through the embedding $\ens{0,1} \hookrightarrow \Qp$. Although the embedding does not reflect many of boolean operations unless $p = 2$, it is possible that the total disconnectedness of $\Qp$ matches given categorical data more than the connectedness of $\R$ because of the discreteness of $C$.

\vs
We briefly explain contents of this paper. In \S \ref{Convention and Preliminaries}, we introduce convention of this paper and recall elementary properties of normed vector spaces. In \S \ref{p-adic Orthogonality}, we introduce a $p$-adic counterpart of orthogonality (cf.\ \S \ref{1-Dimensional Projection}) and formulate an orthogonalisation (cf.\ \S \ref{Orthogonal System}). In \S \ref{p-adic Low Rank Approximation}, we formulated two $p$-adic PCAs (cf.\ \S \ref{Non-reduced p-adic PCA} and \S \ref{Reduced p-adic PCA}) and $p$-adic conterparts of line search and coordinate descent (cf.\ \S \ref{p-adic Line Search and p-adic Coordinate Descent}) useful to check whether the $p$-adic PCAs produce approximately optimal solutions to some extent or not. In \S \ref{Experiment}, we exhibit experimental results.

\section{Convention and Preliminaries}
\label{Convention and Preliminaries}

We denote by $\N$ the set of non-negative integers. For a $d \in \Z$, we set $\N_{< d} \coloneqq \N \cap [0,d)$, and $\N_{\leq d} \coloneqq \N \cap [0,d]$. For sets $X$ and $Y$, we denote by $X^Y$ the set of maps $Y \to X$. We note that every $d \in \N$ is identified with $\N_{< d}$ in set theory, and hence $X^d$ formally means $X^{\N_{< d}}$, which is naturally identified with the set of $d$-tuples in $X$.

\vs
Throughout this paper, let $D$ and $E$ denote positive integers. We use $D$ for the dimension of vectors which we handle as input data, and $E$ for accuracy hyperparameter.

\vs
When we write a pseudocode, a for-loop along a subset of $\N$ denotes the loop of the ascending order, and a for-loop along a general set $S$ denotes a loop in an arbitrary order.

\vs
A {\it valuation field} is a field $k$ equipped with a map $\v{\cdot} \colon k \to [0, \infty)$ called a {\it (multiplicative) valuation} satisfying the following:
\bi
\item[(1)] For any $(c_0,c_1) \in k^2$, the inequality $\v{c_0-c_1} \leq \max \ens{\v{c_0}, \v{c_1}}$ holds.
\item[(2)] For any $(c_0,c_1) \in k^2$, the equality $\v{c_0 c_1} = \v{c_0} \ \v{c_1}$ holds.
\item[(3)] For any $c \in k$, the equality $\v{c} = 0$ holds if and only if $c = 0$.
\item[(4)] The equality $\v{1} = 1$ holds.
\ei
Let $k$ be a valuation field. Its closed unit ball $O_k \coloneqq \set{c \in k}{\v{c} \leq 1}$ centred at $0$ forms a subring of $k$, which admits a unique maximal ideal $m_k \coloneqq \set{c \in O_k}{\v{c} < 1}$. We call $O_k/m_k$ {\it the residue field of $k$}.

\vs
A {\it normed $k$-vector space} is a $k$-vector space $V$ equipped with a map $\n{\cdot} \colon V \to [0,\infty)$ called a {\it norm} satisfying the following:
\bi
\item[(1)] For any $(\vec{v}_0,\vec{v}_1) \in V^2$, the inequality $\n{\vec{v}_0 - \vec{v}_1} \leq \n{\vec{v}_0} + \n{\vec{v}_1}$ holds.
\item[(2)] For any $(c,\vec{v}) \in k \times V$, the equality $\n{c \vec{v}} = \v{c} \ \n{\vec{v}}$ holds.
\item[(3)] For any $\vec{v} \in V$, the equality $\n{\vec{v}} = 0$ holds if and only if $\vec{v} = 0$.
\ei
We note that the condition (1) is usually replaced by the following stronger condition called {\it the strong triangular inequality} in the $p$-adic setting:
\bi
\item[(1)'] For any $(\vec{v}_0,\vec{v}_1) \in V^2$, the inequality $\n{\vec{v}_0 - \vec{v}_1} \leq \max \ens{\n{\vec{v}_0},\n{\vec{v}_1}}$ holds.
\ei
However, we do not assume the condition (1)' because we are interested in the $\ell^q$-norm on $k^D$ for a $q \in [1,\infty)$, which does not satisfy the condition (1) unless $D \leq 1$. Nevertheless, the condition (1) implies the condition (1)' for the $1$-dimensional setting.

\begin{prp}
\label{non-Archimedean property}
Let $V$ be a normed $k$-vector space of dimension $\leq 1$. Then $V$ satisfies the condition (1)'.
\end{prp}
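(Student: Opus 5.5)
If $\dim V = 0$ there is nothing to prove, since $V = \ens{0}$ and both sides of (1)' vanish. So I assume $\dim V = 1$ and fix a nonzero $\vec{e} \in V$. Every vector can then be written $c\vec{e}$ with $c \in k$, and by condition (2) for norms we have $\n{c\vec{e}} = \v{c}\,\n{\vec{e}}$. In other words, the norm on $V$ is just the valuation of $k$ multiplied by the positive constant $\n{\vec{e}}$.

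Now take $\vec{v}_0 = c_0\vec{e}$ and $\vec{v}_1 = c_1\vec{e}$. Then
\[
\n{\vec{v}_0 - \vec{v}_1} = \v{c_0 - c_1}\,\n{\vec{e}} \leq \max\ens{\v{c_0},\v{c_1}}\,\n{\vec{e}} = \max\ens{\n{\vec{v}_0},\n{\vec{v}_1}}.
\]
The inequality in the middle is condition (1) for the valuation on $k$, which is already ultrametric by definition. Multiplying through by the positive constant $\n{\vec{e}}$ preserves both the inequality and the maximum.

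This completes the argument, and there is no genuine obstacle. Notably, the triangle inequality (1) for the norm is never used: the ultrametric inequality is inherited entirely from $k$ through homogeneity. The only step that needs any care is writing every vector as a scalar multiple of a single basis vector, which is exactly where the hypothesis $\dim V \leq 1$ enters.
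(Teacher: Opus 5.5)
Your proof is correct and follows the paper's argument: write both vectors as scalar multiples of a single vector, pull out the scalars using homogeneity of the norm, and apply the ultrametric inequality of the valuation on $k$. The only cosmetic difference is that you treat the zero-dimensional case separately, whereas the paper covers it in the same computation by allowing the common vector to be zero.
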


\begin{proof}
Let $(\vec{v}_0,\vec{v}_1) \in V^2$. Since $V$ is of dimension $\leq 1$, there exists a $(\vec{v},c_0,c_1) \in V \times k^2$ such that $\vec{v}_0 = c_0 \vec{v}$ and $\vec{v}_1 = c_1 \vec{v}$. We have
\be
\n{\vec{v}_0 - \vec{v}_1} & = & \n{(c_0 - c_1) \vec{v}} = \v{c_0 - c_1} \ \n{\vec{v}} \leq \max \ens{\v{c_0},\v{c_1}} \n{\vec{v}} \\
& = & \max \ens{\v{c_0} \ \n{\vec{v}},\v{c_1} \ \n{\vec{v}}} = \max \ens{\n{c_0 \vec{v}},\n{c_1 \vec{v}}} = \max \ens{\n{\vec{v}_0},\n{\vec{v}_1}}.
\ee
\end{proof}

For example, $k$ itself forms a normed $k$-vector space. We always equip a normed $k$-vector space $V$ with the metric
\be
V^2 & \to & [0,\infty) \\
(\vec{v}_0,\vec{v}_1) & \mapsto & \n{\vec{v}_0 - \vec{v}_1}.
\ee
We say that $k$ is a {\it local field} if the underlying metric of $k$ is complete, $\v{k^{\times}} \coloneqq \set{\v{c}}{c \in k^{\times}}$ is a free $\Z$-submodule of the multiplicative group $(0,\infty)$ of rank $1$, and the residue field of $k$ is a finite field.

\vs
Throughout the paper, we fix a prime number $p$. We denote by $\Fp$ the finite field of integers modulo $p$, $\Qp$ the field of $p$-adic numbers, which forms a local field with respect to any $p$-adic absolute value $\v{\cdot} \colon \Qp \to [0,\infty)$, and $\Zp$ the ring $O_{\Qp}$ of $p$-adic integers. We naturally identify $\Fp$ with the residue field of $\Qp$.

\vs
We note that the $p$-adic absolute value with the normalisation $\v{p} = p^{-1}$ is frequently denoted by $\v{\cdot}_p$, but we do not use the convention. Instead, for an $\epsilon \in (0,1)$, we temporarily denote by $\v{\cdot}_{\epsilon}$ the $p$-adic absolute value $\Qp \to [0,\infty)$ with the normalisation $\v{p}_{\epsilon} = \epsilon$. For a $(q,\epsilon) \in (0,\infty) \times (0,1)$, we temporarily denote by $\n{\cdot}_{\epsilon,q}$ the $\ell^q$-norm on $\Qp^D$ with respect to $\v{\cdot}_{\epsilon}$. Then we have
\be
\n{\cdot}_{\epsilon,q} = \n{\cdot}_{\epsilon^q,1}^{1/q}
\ee
for any $(q,\epsilon) \in (0,\infty) \times (0,1)$. Therefore, these norms are essentially reduced to the $\ell^1$-norm with an arbitrary normalisation. In particular, the choice of $\epsilon$ and $q$ is not so notable in our context of optimisation, and hence we will not use the conventions $\v{\cdot}_{\epsilon}$ or $\n{\cdot}_{\epsilon,q}$.

\section{$p$-adic Orthogonality}
\label{p-adic Orthogonality}

We introduce $p$-adic counterpart of the orthogonality based on the relation between perpendicular and the nearest point. The reader should be careful that this notion is not equivalent to the $p$-adic orthogonality based on the $\ell^{\infty}$-norm direct sum (cf.\ \cite{BGR84} Definition 2.4.1/1) or the $p$-adic orthogonality based on an inner product (cf.\ \cite{Kal47} \S 3).

\subsection{$1$-Dimensional Projection}
\label{1-Dimensional Projection}

Let $(X,d)$ be a metric space, and $S$ a non-empty subset of $X$. An $s \in S$ is said to be {\it the nearest neighbour of an $x \in X$ in $S$} if the equality
\be
d(x,s) = d(x,S) \coloneqq \inf_{s' \in S} d(x,s')
\ee
holds. Although the nearest neighbour of an $x \in X$ is not necessarily unique in $S$, we have the following existence criterion:

\begin{prp}
\label{nearest neighbour}
Suppose that $S$ satisfies Bolzano--Weierstrass property, i.e.\ every bounded sequence in $S$ has a subsequence converging in $S$. Then for any $x \in X$, there exists the nearest neighbour of $x$ in $S$.
\end{prp}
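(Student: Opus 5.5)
We take a minimizing sequence, show it is bounded, extract a convergent subsequence by the Bolzano--Weierstrass hypothesis, and pass to the limit using continuity of $d(x,\cdot)$.

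Fix $x \in X$ and set $\delta \coloneqq d(x,S)$. This is a well-defined real number in $[0,\infty)$, because $S$ is non-empty and distances are non-negative. By the definition of the infimum, for each $n \in \N$ we can choose $s_n \in S$ with
\be
d(x,s_n) < \delta + 2^{-n}.
\ee
This gives a minimizing sequence $(s_n)_{n \in \N}$ in $S$ with $d(x,s_n) \to \delta$.

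\vs
Next we check boundedness. For every $n$ we have $d(x,s_n) < \delta + 1$, so the sequence lies in the ball of radius $\delta + 1$ centred at $x$. Hence for all $(m,n) \in \N^2$,
\be
d(s_m,s_n) \leq d(s_m,x) + d(x,s_n) < 2\delta + 2.
\ee
So the sequence is bounded in whichever sense (bounded diameter, or contained in a ball) the hypothesis refers to.

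\vs
By the Bolzano--Weierstrass property there is a subsequence $(s_{n_i})_{i \in \N}$ converging to some $s \in S$. The key point is that the limit lies in $S$ itself, not merely in $X$; this is exactly what the hypothesis provides, so that step needs no further work. The map $d(x,\cdot) \colon X \to [0,\infty)$ is $1$-Lipschitz by the triangle inequality, since $|d(x,y) - d(x,z)| \leq d(y,z)$, and hence continuous. Therefore
\be
d(x,s) = \lim_{i \to \infty} d(x,s_{n_i}) = \delta.
\ee
Since $s \in S$, it is a nearest neighbour of $x$ in $S$.

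\vs
No step is a real obstacle. The only point needing care is that the notion of boundedness for the minimizing sequence matches the one in the hypothesis, and the estimate $d(s_m,s_n) < 2\delta + 2$ covers both readings.
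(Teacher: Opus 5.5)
Your proof is correct and takes essentially the same approach as the paper: a minimising sequence, boundedness, extraction of a subsequence converging in $S$ by the Bolzano--Weierstrass hypothesis, and passage to the limit using continuity of the metric. Your explicit choice $d(x,s_n) < \delta + 2^{-n}$ and the $1$-Lipschitz estimate simply spell out steps the paper leaves implicit.
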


\begin{proof}
Since $\R$ is first countable, there exists an $(s_i)_{i \in \N} \in S^{\N}$ such that $(d(x,s_i))_{i \in \N}$ converges to $d(x,S)$. The convergence of $(d(x,s_i))_{i \in \N}$ implies the boundedness of $(d(x,s_i))_{i \in \N}$, and hence the boundedness of $(s_i)_{i \in \N}$. Since $S$ satisfies Bolzano--Weierstrass property, $(s_i)_{i \in \N}$ admits a subsequence $(s'_i)_{i \in \N} \in S^{\N}$ converging to an $s \in S$. Since $(s'_i)_{i \in \N}$ is a subsequence of $(s_i)_{i \in \N}$, $(d(x,s'_i))_{i \in \N}$ converges to $d(x,S)$. Since $(s'_i)_{i \in \N}$ converges to $s$ and $d$ is continuous, we have $d(x,s) = d(x,S)$. Thus, $s$ is the nearest neighbour of $x$ in $S$.
\end{proof}

Let $k$ be a local field. We give a sufficient condition of Bolzano--Weierstrass property, which plays a key role in this subsection.

\begin{prp}
\label{Bolzano--Weierstrass property}
Every normed $k$-vector space of dimension $\leq 1$ satisfies Bolzano--Weierstrass property.
\end{prp}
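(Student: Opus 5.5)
Let $V$ be a normed $k$-vector space of dimension $\leq 1$. If $V = \ens{0}$, every sequence is constant and the assertion is trivial, so assume $\dim V = 1$ and fix a non-zero $\vec{v} \in V$. The plan is to transfer the question to $k$ via the linear bijection $\varphi \colon k \to V$, $c \mapsto c \vec{v}$. By condition (2) of a norm we have $\n{c \vec{v}} = \v{c} \ \n{\vec{v}}$, and therefore $\n{c_0 \vec{v} - c_1 \vec{v}} = \n{\vec{v}} \ \v{c_0 - c_1}$. Thus $\varphi$ is an isometry up to the positive constant $\n{\vec{v}}$. It follows that a sequence $(c_i \vec{v})_{i \in \N}$ is bounded (respectively convergent to $c \vec{v}$) in $V$ if and only if $(c_i)_{i \in \N}$ is bounded (respectively convergent to $c$) in $k$. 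Hence it suffices to show that $k$ itself satisfies the Bolzano--Weierstrass property.

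For $k$, let $(c_i)_{i \in \N}$ be bounded, say $\v{c_i} \leq R$. Since $\v{k^{\times}}$ is an infinite cyclic subgroup of $(0,\infty)$, it has a generator $\v{\pi} < 1$ for some $\pi \in k^{\times}$. Choose $n \in \N$ with $\v{\pi}^{-n} \geq R$. Then $\pi^n c_i \in O_k$ for all $i$, so we may reduce to a sequence in $O_k$ and rescale back at the end, since multiplication by $\pi^{-n}$ preserves convergence.

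The main step is sequential compactness of $O_k$. For $c \in O_k$ with $\v{c} < 1$ we have $\v{c} \leq \v{\pi}$, because $\v{c}$ is a power of $\v{\pi}$. Hence $m_k = \pi O_k$, and more generally the balls $\pi^j O_k$ are exactly the sets $\set{c}{\v{c} \leq \v{\pi}^j}$. The quotient $O_k/\pi^j O_k$ is finite: by induction on $j$ it has cardinality $\v{O_k/m_k}^j$, using the isomorphism $\pi^{j} O_k/\pi^{j+1} O_k \cong O_k/m_k$ given by multiplication by $\pi^j$, and the finiteness of the residue field.

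We then run a diagonal pigeonhole argument. Infinitely many terms of the sequence lie in a single coset of $\pi O_k$. Among those terms, infinitely many lie in a single coset of $\pi^2 O_k$, and so on. Picking indices $i_0 < i_1 < \cdots$ with $c_{i_j}$ taken from the $j$-th nested coset gives $\v{c_{i_j} - c_{i_l}} \leq \v{\pi}^{\min\ens{j,l}}$. So the subsequence is Cauchy, and it converges in $k$ by completeness. The limit lies in $O_k$ because $O_k$ is closed: $\v{\cdot}$ is continuous and $O_k$ is defined by a non-strict inequality.

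The main obstacle is this compactness step. Specifically, one has to verify that discreteness of the value group yields $m_k = \pi O_k$ and that all quotients $O_k/\pi^j O_k$ are finite; the rest of the argument is formal.
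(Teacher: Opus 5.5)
Your proof is correct, but it takes a different route from the paper's. The paper takes compactness of closed balls in $k$ as known. It then uses Proposition \ref{non-Archimedean property} to get the strong triangular inequality on $W$, which lets it cite \cite{BGR84} (Corollary 2.1.8/3 and Proposition 2.3.3/4) to conclude that any $k$-linear isomorphism $W \to k^{\dim_k W}$ is admissible and hence transports the Bolzano--Weierstrass property. You instead observe that in dimension $1$ homogeneity alone gives $\n{c_0 \vec{v} - c_1 \vec{v}} = \n{\vec{v}} \, \v{c_0 - c_1}$, so $c \mapsto c \vec{v}$ is an isometry up to the constant $\n{\vec{v}}$. This uses only norm axiom (2), and makes both the strong triangular inequality and the equivalence-of-norms machinery unnecessary. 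You also prove, rather than assume, sequential compactness of $O_k$ from the local-field axioms: discreteness of $\v{k^{\times}}$ gives $m_k = \pi O_k$, finiteness of the residue field gives finiteness of $O_k/\pi^j O_k$, and a nested-coset pigeonhole argument plus completeness finishes. Your argument is therefore self-contained and more elementary. The paper's is shorter, and its admissibility argument would apply unchanged to any finite-dimensional space satisfying the strong triangular inequality, whereas your scaled-isometry step is intrinsically one-dimensional. One cosmetic point: you write $\v{O_k/m_k}$ for the cardinality of the residue field, which clashes with the paper's notation for the absolute value; write $\#(O_k/m_k)$ instead, as the paper does for cardinalities.
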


\begin{proof}
Since every closed ball in $k$ is compact, $k^1 \cong k$ satisfies Bolzano--Weierstrass property. Since $k^0$ is a singleton, it also satisfies Bolzano--Weierstrass property.

\vs
Let $W$ be a normed $k$-vector space of dimension $\leq 1$. By Proposition \ref{non-Archimedean property}, $W$ satisfies the strong triangular inequality. By the finiteness of $\dim_k W$, $W$ is $k$-linearly isomorphic to $k^{\dim_k W}$. By the argument above, $k^{\dim_k W}$ satisfies Bolzano--Weierstrass property. By the strong triangular inequality, any $k$-linear isomorphism $W \to k^{\dim_k W}$ is an admissible isomorphism by \cite{BGR84} Corollary 2.1.8/3 and \cite{BGR84} Proposition 2.3.3/4. Therefore, $W$ satisfies Bolzano--Weierstrass property.
\end{proof}

Let $V$ be a normed $k$-vector space. By Proposition \ref{nearest neighbour} and Proposition \ref{Bolzano--Weierstrass property}, we obtain the following:

\begin{crl}
\label{nearest neighbour 2}
For any $(\vec{v}_0,\vec{v}_1) \in V^2$, there exists the nearest neighbour of $\vec{v}_0$ in $k \vec{v}_1$.
\end{crl}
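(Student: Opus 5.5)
The plan is to apply Proposition \ref{nearest neighbour} with $X = V$, equipped with the metric $(\vec{v}_0,\vec{v}_1) \mapsto \n{\vec{v}_0 - \vec{v}_1}$, and with $S = k \vec{v}_1$. The only hypothesis to check is that $S$ is non-empty and satisfies Bolzano--Weierstrass property. Non-emptiness is immediate, since $\vec{0} \in k \vec{v}_1$.

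For Bolzano--Weierstrass property, I will observe that $k \vec{v}_1$ is a $k$-linear subspace of $V$ of dimension $\leq 1$. It is $0$ if $\vec{v}_1 = \vec{0}$, and it is $1$-dimensional otherwise. The restriction of $\n{\cdot}$ to it is again a norm, since the three axioms are inherited from $V$. So $k \vec{v}_1$ is a normed $k$-vector space of dimension $\leq 1$, and Proposition \ref{Bolzano--Weierstrass property} applies to it.

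The one point needing care is compatibility of notions. Boundedness and convergence in Proposition \ref{nearest neighbour} refer to the metric of $X = V$, while Proposition \ref{Bolzano--Weierstrass property} refers to the norm on $k \vec{v}_1$ itself. Both are given by the same restricted norm, so a sequence in $S$ is bounded (respectively convergent in $S$) in one sense if and only if it is in the other. Hence the bounded-subsequence property transfers verbatim, and Proposition \ref{nearest neighbour} gives a nearest neighbour of $\vec{v}_0$ in $k \vec{v}_1$.

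I expect no real obstacle: the heavy lifting, namely the admissibility of linear isomorphisms to $k^{\dim}$ via the strong triangular inequality, is already contained in Proposition \ref{Bolzano--Weierstrass property}. The degenerate case $\vec{v}_1 = \vec{0}$ could also be handled directly, since then $S = \{\vec{0}\}$ and $\vec{0}$ is trivially the nearest neighbour.
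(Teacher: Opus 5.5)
Your proposal is correct and follows the paper's route exactly: the paper obtains the corollary by applying Proposition~\ref{nearest neighbour} to $S = k\vec{v}_1 \subset V$, with the Bolzano--Weierstrass property supplied by Proposition~\ref{Bolzano--Weierstrass property} because $k\vec{v}_1$ is a normed $k$-vector space of dimension $\leq 1$. Your extra checks are precisely the details the paper leaves implicit: non-emptiness via $\vec{0} \in k\vec{v}_1$, that the restricted norm is again a norm, and that its metric agrees with the restriction of the metric on $V$.
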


Let $(\vec{v}_0,\vec{v}_1) \in V^2$. A $\vec{w} \in V$ is said to be a {\it $\vec{v}_1$-component of $\vec{v}_0$} if $\vec{w}$ is the nearest neighbour of $\vec{v}_0$ in $k \vec{v}_1$, which exists by Corollary \ref{nearest neighbour 2}, and is said to be a {\it $\vec{v}_1$-orthogonal component of $\vec{v}_0$} or an {\it orthogonalisation of $\vec{v}_0$ by $\vec{v}_1$} if $\vec{v}_0 - \vec{w}$ is a $\vec{v}_1$-component of $\vec{v}_0$. We say that $\vec{v}_0$ is {\it orthogonal to $\vec{v}_1$} if $\vec{v}_0$ itself is a $\vec{v}_1$-orthogonal component of $\vec{v}_0$, i.e.\ $0 \in V$ is a $\vec{v}_1$-component of $\vec{v}_0$.

\begin{exm}
Suppose $V = k^2$ equipped with the $\ell^1$-norm. Set $\vec{v}_0 \coloneqq (1,0)$, $\vec{v}_1 \coloneqq (0,1)$, and $\vec{v}_2 \coloneqq (1,1)$.
Although $\vec{v}_0$ and $\vec{v}_1$ are orthogonal to $\vec{v}_2$, $\vec{v}_2$ is not orthogonal to $\vec{v}_0$ or $\vec{v}_1$ by $\n{\vec{v}_2 - \vec{v}_0} = \n{\vec{v}_2 - \vec{v}_1} = 1 < 2 = \n{\vec{v}_2}$, and $\vec{v}_0 + \vec{v}_1 = \vec{v}_2$ is not orthogonal to $\vec{v}_2$ by $\n{\vec{v}_2 - \vec{v}_2} = 0 < 2 = \vec{v}_2$. Therefore, orthogonality is not symmetric, and the set $\vec{v}_2^{\perp}$ of elements in $V$ to which $\vec{v}_2$ is orthogonal does not form a $k$-linear subspace of $V$. We note that orthogonality is stable under scalar multiplication.
\end{exm}

Let $\pi$ be a uniformiser of $k$, i.e.\ an element of $k^{\times}$ whose absolute value is the maximum of $\v{k^{\times}} \cap (0,1)$. Suppose that $V$ is $k^D$ equipped with the $\ell^q$-norm for a $q \in [1,\infty)$, and all entries of $\vec{v}_0$ and $\vec{v}_1$ admit finite $\pi$-adic expansions. Then, the loss function
\be
k & \to & [0,\infty) \\
c & \mapsto & \n{\vec{v}_0 - c \vec{v}_1}^q
\ee
is rapidly computed from the multiset of the ratio of entries of $\vec{v}_0$ and $\vec{v}_1$ at the same positions, and hence a $\vec{v}_1$-component of $\vec{v}_0$ is computed by trie tree algorithm. The optimisation problem
\be
\begin{array}{ll}
\r{minimise} & \n{\vec{v}_0 - c \vec{v}_1} \\
\r{subject to} & c \in k
\end{array}
\ee
has a solution in $k$ by Corollary \ref{nearest neighbour 2}, which can be computed approximately modulo $\pi^E$ by depth first search on the trie tree.

\vs
For an $e \in \N_{\leq E}$, we set $\epsilon_e \coloneqq p^{-eq} \in (0,1]$ if $e < E$ and $\epsilon_e \coloneqq 0 \in [0,1]$ if $e = E$. Here are pseudocodes for the solution-finding algorithm for the case where $k = \Qp$ and $\vec{v}_0$ and $\vec{v}_1$ are rescaled by scalar multiplication and approximated up to modulo $p^E$ so that they lie in $\N_{p^E}^D \subset \Zp^D$:

\begin{figure}[H]
\begin{algorithm}[H]
\caption{Pre-computation of the ratio data $\vec{\rho}$ and the valuation data $\vec{\nu}$ for $\vec{v}_0 = (v_{0,d})_{d=0}^{D-1} \in \N_{p^E}^D$ and $\vec{v}_1 = (v_{1,d})_{d=0}^{D-1} \in \N_{p^E}^D$}
\label{RatioValuation}
\begin{algorithmic}[1]
\Function {RatioValuation}{$p,D,E,\vec{v}_0,\vec{v}_1$}
	\State $\vec{\rho} \gets$ the empty array
	\State $\vec{\nu} \gets$ the empty array
	\ForAll {$d \in \N_{< D}$}
		\State $\nu_0 \gets$ the additive $p$-adic valuation of $v_{0,d}$
		\State $\nu_1 \gets$ the additive $p$-adic valuation of $v_{1,d}$
		\If {$v_{1,d} \neq 0$ and $\nu_0 \geq \nu_1$}
			\State Append $\frac{v_{0,d}}{v_{1,d}} \bmod p^{E - \nu_1}$ to $\vec{\rho}$ and $\nu_1$ to $\vec{\nu}$
		\Else
			\State Append $0$ to $\vec{\rho}$ and $E$ to $\vec{\nu}$ \Comment{dummy entries}
		\EndIf
	\EndFor
	\State \Return $(\vec{\rho},\vec{\nu})$
\EndFunction
\end{algorithmic}
\end{algorithm}
\end{figure}

Here, the residue $\frac{v_{0,d}}{v_{1,d}} \bmod p^{E - \nu_1}$ in the line 8 can be computed by the multiplication of $p^{\nu_0 - \nu_1}$, $\frac{v_{0,d}}{p^{\nu_0}}$, and the modular inverse of $\frac{v_{1,d}}{p^{\nu_1}}$ modulo $p^{E - \nu_1}$.

\begin{figure}[H]
\begin{algorithm}[H]
\caption{Construction of a trie tree for ratio data $\vec{\rho} = (\rho_d)_{d=0}^{D-1} \in \N_{p^E}^D$ and valuation data $\vec{\nu} = (\nu_d)_{d=0}^{D-1} \in \N_{\leq E}^D$}
\label{trie tree}
\begin{algorithmic}[1]
\Function {TrieTree}{$p,D,E,\vec{\rho},\vec{\nu}$}
	\State $T \gets$ an $\N_{< p}$-labeled $\R$-weighted rooted tree such that $V_T = \ens{\ast_T}$ and $w_T(\ast_T) = 0$ \Comment{variable for a trie tree}
	\ForAll {$d \in \N_{< D}$}
		\State $n \gets \ast_T$
		\State $w_T(n) \gets w_T(n) + \epsilon_{\nu_d}$
		\ForAll {$e \in \N_{< E - \nu_d}$}
			\State $r \gets \rho_d \bmod p$
			\State $\rho_d \gets \frac{\rho_d - r}{p}$
			\If {There is no child node of $n$ of label $r$}
				\State Append to $T$ a new directed edge from $n$ to a new node of weight $0$ and label $r$
			\EndIf
			\State $n \gets$ the unique child node of $n$ of label $r$
			\State $w_T(n) \gets w_T(n) - \epsilon_{\nu_d + e} + \epsilon_{\nu_d + e + 1}$
		\EndFor
	\EndFor
	\State \Return $T$
\EndFunction
\end{algorithmic}
\end{algorithm}
\end{figure}

Here, for an $\R$-weighted rooted tree $T$, we denote by $\ast_T$ the root of $T$, by $V_T$ the set of vertices of $T$, and by $w_T$ the weight map $V_T \to \R$.

\begin{figure}[H]
\begin{algorithm}[H]
\caption{Solving the optimisation problem for $\vec{v}_0 = (v_{0,d})_{d=0}^{D-1} \in \N_{p^E}^D$ and $\vec{v}_1 = (v_{1,d})_{d=0}^{D-1} \in \N_{p^E}^D$}
\label{trie tree dfs}
\begin{algorithmic}[1]
\Function {TrieTreeDFS}{$p,D,E,\vec{v}_0,\vec{v}_1$}
	\State $(\vec{\rho},\vec{\nu}) \gets$ \Call{RatioValuation}{$p,D,E,\vec{v}_0,\vec{v}_1$}
	\State $T \gets$ \Call{TrieTree}{$p,D,E,\vec{\rho},\vec{\nu}$}
	\State $\temp \gets ((0,w_T(\ast)))$ \Comment{variable for a stack of temporary values}
	\State $\opt = (c_{\opt},w_{\opt}) \gets (0,\infty)$ \Comment{variable for an optimal value}
	\Function {DFS}{$n$}
		\If {There is a child node of $n$}
			\ForAll {child node $n'$ of $n$}
				\State $r \gets$ the label of $n'$
				\State $e \gets$ the length of $\temp$ minus $1$
				\State $(c,w) \gets$ the last entry of $\temp$
				\State Append $(c + r p^e, w + w_T(n'))$ to $\temp$
				\State \Call {DFS}{$n'$}
			\EndFor
		\Else
			\State $(c,w) \gets$ the last entry of $\temp$
			\If {$w_{\opt} > w$}
				\State $\opt \gets (c,w)$
			\EndIf
		\EndIf
		\State Pop the last entry of $\temp$
	\EndFunction
	\State \Call{DFS}{$\ast_T$}
	\State \Return $c_{\opt}$
\EndFunction
\end{algorithmic}
\end{algorithm}
\end{figure}

Practically speaking, when $q$ is a positive integer, it is good to rescale the weight map by $p^{(E-1)q}$ so that the whole process can be handled by integer arithmetic without numerical error.

\subsection{Orthogonal System}
\label{Orthogonal System}

Let $J$ be a finite set, and $X$ a sequence $(\vec{x}_j)_{j \in J} \in V^J$ in $V$ indexed by $J$. A $\vec{v} \in V$ is said to be {\it orthogonal to $X$} if $\vec{v}$ is orthogonal to $\vec{x}_j$ for any $j \in J$. We say that $X$ is an {\it orthogonal system} if for any $(j_0,j_1) \in J^2$ with $j_0 \neq j_1$, $\vec{x}_{j_0}$ is orthogonal to $\vec{x}_{j_1}$. We consider the iteration of an orthogonalisation by $\vec{x}_j$ for each $j \in J$ especially when $X$ is an orthogonal system, and call it an {\it orthogonalisation by $X$}. Here is a pseudocode for an orthogonalisation:

\begin{figure}[H]
\begin{algorithm}[H]
\caption{Orthogonalisation of $\vec{v} \in \N_{< p^E}^D$ by $X = (\vec{x}_j)_{j \in J} \in (\N_{p^E}^D)^J$}
\label{orthogonalisation}
\begin{algorithmic}[1]
\Function {Orthogonalisation}{$p,D,E,X,\vec{v}$}
	\ForAll {$j \in J$} \Comment{process depending on enumeration of $J$}
		\State $c \gets$ \Call{TrieTreeDFS}{$p,D,E,\vec{v},\vec{x}_j$}
		\If {$c = 0$}
			\State \Continue
		\EndIf
		\State $\vec{v} \gets (\vec{v} - c \vec{x}_j) \bmod p^E$
	\EndFor
	\State \Return $\vec{v}$
\EndFunction
\end{algorithmic}
\end{algorithm}
\end{figure}

\begin{exm}
\label{non-stability of orthogonality}
Suppose that the valuation of $k$ is normalised so that $2 \v{\pi} > 1$, e.g.\ $k = \Qp$ with the normalisation $\v{p} = 0.6$, and $V$ is $k^5$ equipped with the $\ell^1$-norm. Set $\vec{v} \coloneqq (0,\pi,\pi,- \pi,0)$, $\vec{x}_0 \coloneqq (1,\pi,\pi,0,0)$, and $\vec{x}_1 \coloneqq (1,0,0,\pi,\pi)$. Then by $2^{-1} < \v{\pi} < 1$, $(\vec{x}_0,\vec{x}_1) \in V^2$ is an orthogonal system, $\vec{v}$ is orthogonal to $\vec{x}_1$, $\vec{v} - \vec{x}_0 = (-1,0,0,- \pi,0)$ is an $\vec{x}_0$-orthogonalisation of $\vec{v}$, and $\vec{v} - \vec{x}_0$ is not orthogonal to $\vec{x}_1$ by $n{(\vec{v} - \vec{x}_0) + \vec{x}_1} = \v{\pi} < 1 + \v{\pi} = \n{\vec{v} - \vec{x}_0}$. Therefore, an orthogonalisation by $\vec{x}_0$ does not preserve the orthogonality to $\vec{x}_1$ despite of the orthogonality of $(\vec{x}_0,\vec{x}_1)$. This is the reason why we noted the dependency on enumeration of $J$ in the line 2 of Algorithm \ref{orthogonalisation}.
\end{exm}

By Example \ref{non-stability of orthogonality}, a single application of an orthogonalisation by an orthogonal system $X$ does not necessarily yields a vector orthogonal to $X$, and hence we instead need repetition of orthogonalisations by $X$ until the norm becomes stable or the number of repetition reaches a fixed threshold, in order to obtain a vector approximately orthogonal to $X$, unlike Gram--Schmidt orthonormalisation and its $p$-adic counterpart in \cite{Kal47} Theorem 5.

\vs
In particular, when we want to translate a system $X = (\vec{x}_j) \in V^J$ into an approximately orthogonal system, we need repetition of the process given by an iteration of an orthogonalisation of all entries other than $\vec{x}_j$ by $\vec{x}_j$ for each $j \in J$ until $X$ becomes an orthogonal system or the number of repetition reaches a fixed threshold $T_{\IO}$, and call the repetition an {\it iterated orthogonalisation of $X$}. Here is a pseudocode for an iterated orthogonalisation:

\begin{figure}[H]
\begin{algorithm}[H]
\caption{Iterated orthogonalisation of $X = (\vec{x}_j)_{j \in J} \in (\N_{p^E}^D)^J$}
\label{iterated orthogonalisation}
\begin{algorithmic}[1]
\Function {IteratedOrthogonalisation}{$p,D,E,X$}
	\State $\temp \gets -1$
	\State $t \gets 0$
	\While {$t < T_{\IO}$}
		\ForAll {$j \in J$} \Comment{process depending on enumeration of $J$}
			\If{$t = T_{\IO}$}
				\State \Break
			\EndIf
			\State $t \gets t + 1$
			\State $b \gets$ False \Comment{variable monitoring update of $X$}
			\ForAll {$j' \in J$}
				\If{j = j'}
					\State \Continue
				\EndIf
				\State $c \gets$ \Call{TrieTreeDFS}{$p,D,E,\vec{x}_{j'},\vec{x}_j$}
				\If {$c = 0$}
					\State \Continue
				\EndIf
				\State $b \gets$ True
				\State $\vec{x}_{j'} \gets (\vec{x}_{j'} - c \vec{x}_j) \bmod p^E$
			\EndFor
			\If {$b$}
				\State $\temp \gets t$
			\ElsIf {$t = \temp + \# J$}
				\State \Break \Comment{the case where $X$ is an orthogonal system}
			\EndIf
		\EndFor
	\EndWhile
	\State \Return $X$
\EndFunction
\end{algorithmic}
\end{algorithm}
\end{figure}

When $q$ is a positive integer, Algorithm \ref{trie tree dfs} can be handled by integer arithmetic without numerical error. In particular, the norm satisfies the well-foundedness, and hence Algorithm \ref{iterated orthogonalisation} terminates even when we formally set $T_{\IO} = \infty$.

\section{$p$-adic Low Rank Approximation}
\label{p-adic Low Rank Approximation}

We continue to consider the case where $V$ is $k^D$ equipped with the $\ell^q$-norm for a $q \in [1,\infty)$. Let $I$ be a finite set, $Y = (\vec{y}_i)_{i \in I} \in V^I$, and $D_{-} \in \N_{< D}$. We consider the following optimisation problem:
\be
\begin{array}{ll}
\r{minimise} & \sum_{i \in I} \n{\vec{y}_i - \sum_{d=0}^{D_{-}-1} c_{d,i} \vec{x}_d} \\
\r{subject to} & C = ((c_{d,i})_{i \in I})_{d=0}^{D_{-}-1} \in (k^I)^{D_{-}} \land X = (\vec{x}_d)_{d=0}^{D_{-}-1} \in V^{D_{-}}
\end{array}
\ee
If we were considering the $\ell^{\infty}$-norm, then the optimisation problem would be immediately solved by matrix factorisation based on Smith normal form. However, the optimisation problem for the $\ell^q$-norm requires much more complicated consideration.

\vs
Rescaling $Y$ by scalar multiplication, we may assume $Y \in (O_k^D)^I$ and the optimisation problem is reduced to the following optimisation problem:
\be
\begin{array}{ll}
\r{minimise} & \sum_{i \in I} \n{\vec{y}_i - \sum_{d=0}^{D_{-}-1} c_{d,i} \vec{x}_d} \\
\r{subject to} & C = ((c_{d,i})_{i \in I})_{d=0}^{D_{-}-1} \in (O_k^I)^{D_{-}} \land X = (\vec{x}_d)_{d=0}^{D_{-}-1} \in (O_k^D)^{D_{-}}
\end{array}
\ee
An approximate solution $X$ plays a role for a new coordinate system for the space where the sample data $\set{\vec{y}_i}{i \in I}$ normally lie in. When $D_{-}$ is significantly small and the corresponding loss is sufficiently small for a fixed $X$, then computation of the coefficient matrix $C$ gives dimensionality reduction of the sample data.

\vs
In order to obtain a heuristic solution $(C,X)$ for $(D_{-},Y)$, we consider the following recursive approach:
\bi
\item[(1)] If $D_{-} = 0$, then return the pair $(C,X) \coloneqq ((),())$ of the trivial elements $() \in (O_k^I)^{D_{-}} = (O_k^I)^0$ and $() \in (O_k^D)^{D_{-}} = (O_k^D)^0$.
\item[(2)] Choose an $\vec{x} \in O_k^D$ in some way, and compute a $\vec{c} = (c_i)_{i \in I} \in O_k^I$ such that $\vec{y}_i - c_i \vec{x}$ is orthogonal to $\vec{x}$, i.e.\ $c_i \vec{x}$ is the $\vec{x}$-component of $\vec{y}_i$, for any $i \in I$.
\item[(3)] Return the pair $(C,X) \coloneqq (\vec{c} \frown C', \vec{x} \frown X')$, where $(C',X')$ is a solution for $(D_{-}-1,(\vec{y}_i - c_i \vec{x})_{i \in I})$ and $\frown$ denotes concatenation of a row vector to a matrix.
\ei
We identify $(C,X)$ with the sequence of pairs $(\vec{c},\vec{x}) \in O_k^I \times O_k^D$ in the way compatible with the construction above. We define {\it the score} of a pair $(\vec{c},\vec{x}) = ((c_i)_{i \in I},(x_d)_{d=0}^{D-1})$ as
\be
\left( \sum_{i \in I} \sum_{d=0}^{D-1} \v{c_i x_d}^q \right)^{1/q} = \left( \sum_{i \in I} \v{c_i}^q \right)^{1/q} \left( \sum_{d=0}^{D-1} \v{x_d}^q \right)^{1/q},
\ee
i.e.\ the $\ell^q$-matrix norm of ${}^{\r{t}}\vec{c} \vec{x} \coloneqq ((c_i x_d)_{d=0}^{D-1})_{i \in I} \in (O_k^D)^I$. We call the composite of the recursive method above (for some method to choose $\vec{x}$ in the step (2)) and sorting $(C,X)$ in the descending order of the score a {\it $p$-adic PCA}. Here is a pseudocode of the computation of $\vec{c}$ in the step (2) and the update of $Y$:

\begin{figure}[H]
\begin{algorithm}[H]
\caption{Recursion body of a $p$-adic PCA for $Y = (\vec{y}_i)_{i \in I} \in (\N_{p^E}^D)^I$}
\label{recursion body}
\begin{algorithmic}[1]
\Function {PCABody}{$p,D,E,Y,C,X,\vec{x}$}
	\State $\vec{c} = (c_i)_{i \in I} \gets (0)_{i \in I}$
	\ForAll {$i \in I$}
		\State $c_i \gets$ \Call{TrieTreeDFS}{$p,D,E,\vec{y}_i,\vec{x}$}
	\EndFor
	\If {$\vec{c}$ is a zero vector}
		\State \Return $(0,Y,C,X)$ \Comment{failure of update}
	\EndIf
	\ForAll {$i \in I$}
		\State $\vec{y}_i \gets (\vec{y}_i - c_i \vec{x}) \bmod p^E$
	\EndFor
	\State Append $\vec{c}$ to $C$
	\State Append $\vec{x}$ to $X$
	\State \Return $(1,Y,C,X)$ \Comment{success of update}
\EndFunction
\end{algorithmic}
\end{algorithm}
\end{figure}

The heuristic solution $(C,X)$ obtained by a $p$-adic PCA is not necessarily approximately optimal unless $\vec{x}$ is chosen appropriately in the step (2), because an optimal solution $(C,X)$ for $(D_{-},Y)$ before sorting is not necessarily a greedy solution, i.e.\ the pair $(\vec{c},\vec{x})$ of the first entries $\vec{c}$ and $\vec{x}$ of $C$ and $X$ respectively is not necessarily an optimal solution for $(1,Y)$. We will introduce in \S \ref{Non-reduced p-adic PCA} and \S \ref{Reduced p-adic PCA} two specific methods to choose $\vec{x}$ in the step (2), which we call {\it non-reduced $p$-adic PCA} and {\it reduced $p$-adic PCA}.

\subsection{Non-reduced $p$-adic PCA}
\label{Non-reduced p-adic PCA}

{\it Non-reduced $p$-adic PCA} is the $p$-adic PCA which chooses $\vec{x}$ to be the first non-zero entry of $Y$ (if exists). We note that $Y$ changes through the recursive process, and hence the whole $X$ is not computed before the recursive step. Here is a pseudocode for the process:

\begin{figure}[H]
\begin{algorithm}[H]
\caption{Non-reduced $p$-adic PCA for $Y = (\vec{y}_i)_{i \in I} \in (\N_{p^E}^D)^I$}
\label{non-reduced PCA}
\begin{algorithmic}[1]
\Function {NRPCA}{$p,D,E,Y,D_{-}$}
	\State $C \gets$ the empty array
	\State $X \gets$ the empty array
	\ForAll {$j \in I$}
		\If {$D_{-} = 0$}
			\State \Break
		\EndIf
		\If {$\vec{y}_j$ is a zero vector}
			\State \Continue
		\EndIf
		\State $\vec{x} \gets \vec{y}_j$
		\State $(b,Y,C,X) \gets $ \Call{PCABody}{$p,D,E,Y,C,X,\vec{x}$}
		\State $D_{-} \gets D_{-} - b$
	\EndFor
	\State Sort $(C,X)$ in the descending order of the score
	\State \Return $(C,X)$
\EndFunction
\end{algorithmic}
\end{algorithm}
\end{figure}

Here, $\vec{w} \bmod p^E$ denotes $(w_d \bmod p^E)_{d=0}^{D-1}$ for a $\vec{w} = (w_d)_{d=0}^{D-1} \in \Zp^D$. As an optional improvement of the scores of entries of $(C,X)$, it is good to execute \Call{NRPCA}{$p,D,E,Y,D'_{-}$} for a $D'_{-} \in \N_{\leq D}$ possibly bigger than $D_{-}$ and pop the last entries of $(C,X)$ while its length exceeds $D_{-}$.

\subsection{Reduced $p$-adic PCA}
\label{Reduced p-adic PCA}

By Example \ref{non-stability of orthogonality}, the return value $(C,X)$ of non-reduced $p$-adic PCA does not necessarily satisfy the property that the coordinate system $X$ is an orthogonal system. The redundancy based on the non-orthogonality of $X$ affects the largeness of the $\ell^q$-norm of rows $\vec{x}$ of $X$ relative to the $\ell^{\infty}$-norm, which prevents $(C,X)$ to be approximately optimal.

\vs
On the other hand, {\it reduced $p$-adic PCA} is the $p$-adic PCA which chooses $\vec{x}$ to be the first unused entry (if exists) of a pre-computed coordinate system $Z$ which is an approximately orthogonal system. In particular, candidates of entries of the whole $X$ are computed before the recursive step.

\vs
We explain the pre-computation of $Z$. First, we compute an iterated orthogonalisation $Y'$ of $Y \in V^I$. Next, we sort entries of $Y'$ in the descending order with respect to the norm. Then $Z$ is the resulting system, which belongs to $V^{\# I}$. Here is a pseudocode for the process:

\begin{figure}[H]
\begin{algorithm}[H]
\caption{Reduced $p$-adic PCA for $Y = (\vec{y}_i)_{i \in I} \in (\N_{p^E}^D)^I$}
\label{reduced PCA}
\begin{algorithmic}[1]
\Function {RPCA}{$p,D,E,Y,D_{-}$}
	\State $Y' \gets$ \Call{IteratedOrthogonalisation}{$p,D,E,Y$}
	\State $D'_{-} \gets$ the length of $Y'$
	\State $Z = (\vec{z}_d)_{d=0}^{D'_{-}-1} \gets$ the sequence given by sorting entries of $Y'$ in the descending order of the norm
	\State $C \gets$ the empty array
	\State $X \gets$ the empty array
	\ForAll {$d \in \N_{< D'_{-}}$}
		\If {$D_{-} = 0$ or $\vec{z}_d$ is a zero vector}
			\State \Break
		\EndIf
		\State $(b,Y,C,X) \gets $ \Call{PCABody}{$p,D,E,Y,C,X,\vec{z}_d$}
		\State $D_{-} \gets D_{-} - b$
	\EndFor
	\State Sort $(C,X)$ in the descending order of the score
	\State \Return $(C,X)$
\EndFunction
\end{algorithmic}
\end{algorithm}
\end{figure}

We list the main differences between non-reduced $p$-adic PCA and reduced $p$-adic PCA:
\bi
\item[(1)] The coordinate system $X$ is dynamically computed with $D_{-}$ recursive steps in a step-by-step way in non-reduced $p$-adic PCA, while candidates of entries of $X$ are pre-computed with typically around $\# I$ or $2 \# I$ steps in reduced $p$-adic PCA.
\item[(2)] The coordinate system $X$ is not an orthogonal system, while it is an approximately orthogonal system in reduced PCA.
\ei
In particular, there is a trade-off between the heaviness of the pre-computation of $X$ and the degree of approximate orthogonality of $X$.

\subsection{$p$-adic Line Search and $p$-adic Coordinate Descent}
\label{p-adic Line Search and p-adic Coordinate Descent}

By the definition of the orthogonality, $(C,X)$ is optimal only when every entry of the error term
\be
Y - {}^{\r{t}}CX \coloneqq \left( y_i - \sum_{d=0}^{D_{-}-1} c_{d,i} \vec{x}_d \right)_{i \in I} \in (O_k^D)^I
\ee
for $(C,X)$ is orthogonal to $\vec{c} X \coloneqq \sum_{d=0}^{D_{-}-1} c_d \vec{x}_d \in V$ for any $\vec{c} = (c_d)_{d=0}^{D_{-}-1} \in O_k^{D_{-}}$. Therefore, it is natural to ask whether a heuristic solution $(C,X)$ is {\it locally optimal} for a given finite subset $S$ of $O_k X \coloneqq \set{\vec{c} X}{\vec{c} \in O_k^{D_{-}}}$ in the sense that every entry of $Y - {}^{\r{t}}CX$ is orthogonal to any $\vec{x} \in S$.

\vs
Even if a given heuristic solution $(C,X)$ is not locally optimal for $S$, for any $\vec{x} \in S$, we may add to $C$ the coefficient matrix of the $\vec{x}$-components of entries of $Y - {}^{\r{t}}CX$ without increasing the loss $\n{Y - {}^{\r{t}}CX}$ by the definition of the notion of an $\vec{x}$-component.

\vs
A {\it $p$-adic line search} for $S$ is repetition of such updates of $C$ until $(C,X)$ becomes locally optimal for $S$ or the number of the repetition reaches a fixed threshold $T_{\LS}$. A {\it $p$-adic coordinate descent} is a $p$-adic line search for $\set{\vec{x}_d}{d \in \N_{< D_{-}}}$. Through a $p$-adic line search for $S$, we may expect that $(C,X)$ is approximately locally optimal for $S$. Here is a pseudocode for a $p$-adic line search:

\begin{figure}[H]
\begin{algorithm}[H]
\caption{A $p$-adic line search for $\set{(\theta_{d,h})_{d=0}^{D_{-}-1} X}{h \in \N_{< H}}$ for a $\Theta = ((\theta_{d,h})_{h=0}^{H-1})_{d=0}^{D_{-}-1} \in (O_k^H)^{D_{-}}$ with $H \in \N$ applied to $C = ((c_{d,i})_{i \in I})_{d=0}^{D_{-}-1}$}
\label{line search}
\begin{algorithmic}[1]
\Function {LineSearch}{$p,D,E,Y,C,X,\Theta$}
	\State $Y = (\vec{y}_i)_{i \in I} \gets Y - {}^{\r{t}}CX$
	\State $h \gets 0$
	\State $\temp \gets -1$
	\ForAll {$t \in \N_{< T_{\LS}}$}
		\State $\vec{x} \gets (\theta_{d,h})_{d=0}^{D_{-1}-1} X$
		\State $b \gets$ False \Comment{variable monitoring update of $C$}
		\ForAll {$i \in I$}
			\State $c \gets$ \Call{TrieTreeDFS}{$p,D,E,\vec{y}_i,\vec{x}$}
			\If {$c = 0$}
				\State \Continue
			\EndIf
			\State $b \gets$ True
			\ForAll {$d \in \N_{< D_{-}}$}
				\State $c_{d,i} \gets (c_{d,i} + c \theta_{d,h}) \bmod p^E$
			\EndFor
			\State $\vec{y}_i \gets (\vec{y}_i - c \vec{x}) \bmod p^E$
		\EndFor
		\If {$b$}
			\State $\temp \gets t$
			\If {$h = H - 1$}
				\State $h \gets 0$
			\Else
				\State $h \gets h + 1$
			\EndIf
		\ElsIf {$t = \temp + H$}
			\State \Break \Comment{the case where $(C,X)$ is locally optimal}
		\EndIf
	\EndFor
	\State \Return $C$
\EndFunction
\end{algorithmic}
\end{algorithm}
\end{figure}

A $p$-adic line search is useful to check whether a given heuristic solution is approximately optimal to some extent or not, and hence to check whether a given heuristic method works well or not.

\vs
In particular, non-reduced $p$-adic PCA and reduced $p$-adic PCA returned approximately locally optimal solutions in experiments for both of $S_0 = \set{\vec{x}_d}{d \in \N_{< D_{-}}}$ and $S_1 = \set{\vec{c} X}{\vec{c} \in S'}$ with a random subset $S' \subset O_k^{D_{-}}$ of cardinality $20$. More precisely, when we applied $p$-adic line searches for $S_0$ and $S_1$ in this order to heuristic solutions $(C,X)$ obtained by non-reduced $p$-adic PCA and reduced $p$-adic PCA, the former coordinate descent only non-significantly decreased the loss $\n{Y - {}^{\r{t}}CX}$, and the latter line search did not change the loss $\n{Y - {}^{\r{t}}CX}$, i.e.\ $(C,X)$ already became locally optimal for $S_1$. Therefore we refer only to the experimental results of non-reduced $p$-adic PCA and reduced $p$-adic PCA without $p$-adic line searches in \S \ref{Experiment}.

\section{Experiment}
\label{Experiment}

We abbreviate non-reduced $p$-adic PCA to {\it NRPCA}, and reduced $p$-adic PCA to {\it RPCA}. We compare the two $p$-adic PCAs for the case $p = 7$, $D = 100$, $E = 5$, $k = \Qp$ with normalisation $\v{p} = p^{-1}$, $V = \Qp^D$ equipped with the $\ell^1$-norm, $I = \N_{< 10000}$, and $D_{-} = 20$. For this purpose, we consider anomaly detection tasks where the $\ell^{\infty}$-norms of some of normal data can be smaller than those of many of anomalous data. The reason why we consider this $\ell^{\infty}$-norm condition is because dimensionality reduction by matrix factorisation based on Smith normal form, which recognises data of large $\ell^{\infty}$-norm as significant ones, does not work for such tasks.

\vs
Since we only use $p$-adic values modulo $p^E$, when we refer to a random point in $\Zp^d$ for a $d \in \N$, we consider a random point in the finite set $\N_{< p^E}^d$.

\vs
We denote by $S \subset \Zp^D$ the subset of normal points. We take an $r \in [0,100)$, and call it {\it the anomaly rate}. A {\it possibly anomalous point} is a point in $\Zp^D$ randomly taken from $S$ in probability $(100 - r) \%$ and from $\Zp^D$ in probability $r \%$. We generate the test data matrix $Y \in (\Zp^D)^I$ as a sequence of $\# I = 10000$ possibly anomalous points.

\vs
For a non-empty subset $I' \subset I$, we define {\it the compress ratio} $r_{\rC}(I')$ of $I'$ as
\be
1 - \frac{\sum_{i \in I'} \n{\vec{y}_i - \sum_{d=0}^{D_{-}-1} c_{d,i} \vec{x}_d}}{\sum_{i \in I'} \n{\vec{y}_i}} \in [0,1],
\ee
and {\it the deduced anomaly ratio} $r_{\rA}(I')$ of $I'$ as
\be
\frac{\# \set{i \in I'}{r_{\rC}(\ens{i}) < \epsilon_{\AD}}}{\# I'} \in [0,1]
\ee
using the return value $(C,X) = (((c_{d,i})_{i \in I})_{d=0}^{D_{-1}-1},(\vec{x}_d)_{d=0}^{D_{-}-1})$ of a $p$-adic PCA, where $\epsilon_{\AD} \in (0,1]$ is a fixed threshold. In particular, We deduce that an $i \in I$ is an index of anomalous data if the compress ratio
\be
r_{\rC}(\ens{i}) = 1 - \frac{\n{y_i - \sum_{d=0}^{D_{-}-1} c_{i,d} \vec{x}_d}}{\n{y_i}}
\ee
of the singleton $\ens{i}$ is smaller than $\epsilon_{\AD}$. In this section, we specifically set $\epsilon_{\AD} \coloneqq 0.2$.

\vs
We measure the performance of the two $p$-adic PCAs by the following amount:
\bi
\item[(1)] The false positive ratio, i.e.\ the deduced anomaly ratio for normal indices.
\item[(2)] The true positive ratio, i.e.\ the deduced anomaly ratio for anomalous indices.
\item[(3)] The compress ratio for normal indices.
\item[(4)] The compress ratio for anomalous indices.
\ei
The scores (1) and (4) are expected to be small, and the scores (2) and (3) are expected to be large.

\subsection{Open Balls}
\label{Open Balls}

We consider the case where $S$ is a disjoint union of closed balls of radius $\v{p}^2$. Let $B \in \N$ denote the number of closed balls. We label the balls by $\N_{< B}$. We call the balls corresponding to even labels (resp.\ odd labels) {\it even balls} (resp.\ {\it odd balls}). We choose the centres of even balls randomly from $\Zp^D$, and those of odd balls randomly from $p \Zp^D$.

\vs
We exhibit experiment results in a table of the following type:

\begin{table}[H]
\begin{center}
\caption{Algorithm name (values of $B$,$r$)}
\begin{tabular}{|c||c|c|c|c|}
\hline
\diagbox{I}{O} & A & N & $r_{\rA}$ & $r_{\rC}$ \\
\hline \hline
A & the number of & the number of & $r_{\rA}$ for & $r_{\rC}$ for \\
 & anomalous inputs & anomalous inputs & anomalous inputs & anomalous inputs \\
 & deduced to & deduced to & & \\
 & be normal & be anomalous & & \\
\hline
ball $0$ & the number of & the number of & $r_{\rA}$ for & $r_{\rC}$ for \\
 & normal inputs & normal inputs & normal inputs & normal inputs \\
 & in the ball $0$ & in the ball $0$ & in the ball $0$ & in the ball $0$ \\
 & deduced to & deduced to & & \\
 & be normal & be anomalous & & \\
\hline
$\vdots$ & $\vdots$ & $\vdots$ & $\vdots$ & $\vdots$ \\
\hline
ball $B-1$ & the number of & the number of & $r_{\rA}$ for & $r_{\rC}$ for \\
 & normal inputs & normal inputs & normal inputs & normal inputs \\
 & in the ball $B-1$ & in the ball $B-1$ & in the ball $B-1$ & in the ball $B-1$ \\
 & deduced to & deduced to & & \\
 & be normal & be anomalous & & \\
\hline
N & the number of & the number of & $r_{\rA}$ for & $r_{\rC}$ for \\
 & normal inputs & normal inputs & normal inputs & normal inputs \\
 & deduced to & deduced to & & \\
 & be normal & be anomalous & & \\
\hline
\end{tabular}
\end{center}
\end{table}

\vs
Here are results for $(B,r) = (10,1)$, $(10,10)$, $(30,1)$, and $(30,10)$:

\begin{table}[H]
\begin{center}
\caption{NRPCA ($B = 10 , r = 1)$}
\begin{tabular}{|c||c|c|c|c|}
\hline
\diagbox{I}{O} & A & N & $r_{\rA}$ & $r_{\rC}$ \\
\hline \hline
A & $86$ & $22$ & $0.80$ & $0.17$ \\
\hline
ball $0$ & $0$ & $1059$ & $0.00$ & $0.98$ \\
\hline
ball $1$ & $0$ & $944$ & $0.00$ & $0.89$ \\
\hline
ball $2$ & $0$ & $995$ & $0.00$ & $0.98$ \\
\hline
ball $3$ & $33$ & $940$ & $0.03$ & $0.66$ \\
\hline
ball $4$ & $0$ & $1013$ & $0.00$ & $0.98$ \\
\hline
ball $5$ & $0$ & $1021$ & $0.00$ & $0.87$ \\
\hline
ball $6$ & $0$ & $969$ & $0.00$ & $0.98$ \\
\hline
ball $7$ & $0$ & $977$ & $0.00$ & $0.88$ \\
\hline
ball $8$ & $0$ & $960$ & $0.00$ & $0.98$ \\
\hline
ball $9$ & $0$ & $981$ & $0.00$ & $0.80$ \\
\hline
N & $33$ & $9859$ & $0.00$ & $0.96$ \\
\hline
\end{tabular}

\caption{RPCA ($B = 10 , r = 1)$}
\begin{tabular}{|c||c|c|c|c|}
\hline
\diagbox{I}{O} & A & N & $r_{\rA}$ & $r_{\rC}$ \\
\hline \hline
A & $108$ & $0$ & $1.00$ & $0.10$ \\
\hline
ball $0$ & $0$ & $1059$ & $0.00$ & $0.51$ \\
\hline
ball $1$ & $298$ & $646$ & $0.32$ & $0.58$ \\
\hline
ball $2$ & $0$ & $995$ & $0.00$ & $0.98$ \\
\hline
ball $3$ & $0$ & $973$ & $0.00$ & $0.87$ \\
\hline
ball $4$ & $0$ & $1013$ & $0.00$ & $0.98$ \\
\hline
ball $5$ & $0$ & $1021$ & $0.00$ & $0.86$ \\
\hline
ball $6$ & $0$ & $969$ & $0.00$ & $0.98$ \\
\hline
ball $7$ & $977$ & $0$ & $1.00$ & $0.09$ \\
\hline
ball $8$ & $0$ & $960$ & $0.00$ & $0.91$ \\
\hline
ball $9$ & $0$ & $981$ & $0.00$ & $0.87$ \\
\hline
N & $1275$ & $8617$ & $0.13$ & $0.84$ \\
\hline
\end{tabular}
\end{center}
\end{table}

\begin{table}[H]
\begin{center}
\caption{NRPCA ($B = 10 , r = 10)$}
\begin{tabular}{|c||c|c|c|c|}
\hline
\diagbox{I}{O} & A & N & $r_{\rA}$ & $r_{\rC}$ \\
\hline \hline
A & $379$ & $695$ & $0.35$ & $0.21$ \\
\hline
ball $0$ & $0$ & $909$ & $0.00$ & $0.98$ \\
\hline
ball $1$ & $0$ & $868$ & $0.00$ & $0.64$ \\
\hline
ball $2$ & $0$ & $920$ & $0.00$ & $0.98$ \\
\hline
ball $3$ & $0$ & $894$ & $0.00$ & $0.60$ \\
\hline
ball $4$ & $0$ & $850$ & $0.00$ & $0.98$ \\
\hline
ball $5$ & $0$ & $907$ & $0.00$ & $0.62$ \\
\hline
ball $6$ & $0$ & $898$ & $0.00$ & $0.98$ \\
\hline
ball $7$ & $0$ & $895$ & $0.00$ & $0.32$ \\
\hline
ball $8$ & $0$ & $870$ & $0.00$ & $0.98$ \\
\hline
ball $9$ & $0$ & $915$ & $0.00$ & $0.72$ \\
\hline
N & $0$ & $8926$ & $0.00$ & $0.93$ \\
\hline
\end{tabular}

\caption{RPCA ($B = 10 , r = 10)$}
\begin{tabular}{|c||c|c|c|c|}
\hline
\diagbox{I}{O} & A & N & $r_{\rA}$ & $r_{\rC}$ \\
\hline \hline
A & $1068$ & $6$ & $0.99$ & $0.11$ \\
\hline
ball $0$ & $0$ & $909$ & $0.00$ & $0.98$ \\
\hline
ball $1$ & $0$ & $868$ & $0.00$ & $0.87$ \\
\hline
ball $2$ & $0$ & $920$ & $0.00$ & $0.37$ \\
\hline
ball $3$ & $135$ & $759$ & $0.15$ & $0.75$ \\
\hline
ball $4$ & $0$ & $850$ & $0.00$ & $0.98$ \\
\hline
ball $5$ & $0$ & $907$ & $0.00$ & $0.88$ \\
\hline
ball $6$ & $0$ & $898$ & $0.00$ & $0.98$ \\
\hline
ball $7$ & $0$ & $895$ & $0.00$ & $0.85$ \\
\hline
ball $8$ & $0$ & $870$ & $0.00$ & $0.98$ \\
\hline
ball $9$ & $0$ & $915$ & $0.00$ & $0.63$ \\
\hline
N & $135$ & $8791$ & $0.02$ & $0.85$ \\
\hline
\end{tabular}
\end{center}
\end{table}

\begin{table}[H]
\begin{center}
\caption{NRPCA ($B = 30 , r = 1)$}
\begin{tabular}{|c||c|c|c|c|}
\hline
\diagbox{I}{O} & A & N & $r_{\rA}$ & $r_{\rC}$ \\
\hline \hline
A & $38$ & $63$ & $0.38$ & $0.21$ \\
\hline
ball $0$ & $0$ & $341$ & $0.00$ & $0.59$ \\
\hline
ball $1$ & $164$ & $154$ & $0.52$ & $0.20$ \\
\hline
ball $2$ & $0$ & $316$ & $0.00$ & $0.54$ \\
\hline
ball $3$ & $2$ & $296$ & $0.01$ & $0.21$ \\
\hline
ball $4$ & $0$ & $353$ & $0.00$ & $0.52$ \\
\hline
ball $5$ & $0$ & $322$ & $0.00$ & $0.23$ \\
\hline
ball $6$ & $0$ & $344$ & $0.00$ & $0.71$ \\
\hline
ball $7$ & $0$ & $333$ & $0.00$ & $0.30$ \\
\hline
ball $8$ & $0$ & $320$ & $0.00$ & $0.47$ \\
\hline
ball $9$ & $0$ & $318$ & $0.00$ & $0.27$ \\
\hline
ball $10$ & $0$ & $328$ & $0.00$ & $0.58$ \\
\hline
ball $11$ & $0$ & $290$ & $0.00$ & $0.28$ \\
\hline
ball $12$ & $0$ & $351$ & $0.00$ & $0.62$ \\
\hline
ball $13$ & $0$ & $333$ & $0.00$ & $0.31$ \\
\hline
ball $14$ & $0$ & $335$ & $0.00$ & $0.60$ \\
\hline
ball $15$ & $106$ & $242$ & $0.30$ & $0.22$ \\
\hline
ball $16$ & $0$ & $322$ & $0.00$ & $0.62$ \\
\hline
ball $17$ & $0$ & $314$ & $0.00$ & $0.24$ \\
\hline
ball $18$ & $0$ & $369$ & $0.00$ & $0.58$ \\
\hline
ball $19$ & $0$ & $314$ & $0.00$ & $0.27$ \\
\hline
ball $20$ & $0$ & $321$ & $0.00$ & $0.57$ \\
\hline
ball $21$ & $0$ & $320$ & $0.00$ & $0.28$ \\
\hline
ball $22$ & $0$ & $325$ & $0.00$ & $0.60$ \\
\hline
ball $23$ & $16$ & $316$ & $0.05$ & $0.27$ \\
\hline
ball $24$ & $0$ & $321$ & $0.00$ & $0.51$ \\
\hline
ball $25$ & $375$ & $0$ & $1.00$ & $0.18$ \\
\hline
ball $26$ & $0$ & $330$ & $0.00$ & $0.54$ \\
\hline
ball $27$ & $0$ & $332$ & $0.00$ & $0.26$ \\
\hline
ball $28$ & $0$ & $317$ & $0.00$ & $0.53$ \\
\hline
ball $29$ & $0$ & $359$ & $0.00$ & $0.26$ \\
\hline
N & $663$ & $9236$ & $0.07$ & $0.53$ \\
\hline
\end{tabular}
\end{center}
\end{table}

\begin{table}[H]
\begin{center}
\caption{RPCA ($B = 30 , r = 1)$}
\begin{tabular}{|c||c|c|c|c|}
\hline
\diagbox{I}{O} & A & N & $r_{\rA}$ & $r_{\rC}$ \\
\hline \hline
A & $66$ & $35$ & $0.65$ & $0.18$ \\
\hline
ball $0$ & $0$ & $341$ & $0.00$ & $0.98$ \\
\hline
ball $1$ & $317$ & $1$ & $1.00$ & $0.15$ \\
\hline
ball $2$ & $0$ & $316$ & $0.00$ & $0.48$ \\
\hline
ball $3$ & $0$ & $298$ & $0.00$ & $0.26$ \\
\hline
ball $4$ & $0$ & $353$ & $0.00$ & $0.84$ \\
\hline
ball $5$ & $0$ & $322$ & $0.00$ & $0.29$ \\
\hline
ball $6$ & $0$ & $344$ & $0.00$ & $0.62$ \\
\hline
ball $7$ & $0$ & $333$ & $0.00$ & $0.33$ \\
\hline
ball $8$ & $0$ & $320$ & $0.00$ & $0.58$ \\
\hline
ball $9$ & $0$ & $318$ & $0.00$ & $0.30$ \\
\hline
ball $10$ & $0$ & $328$ & $0.00$ & $0.63$ \\
\hline
ball $11$ & $0$ & $290$ & $0.00$ & $0.33$ \\
\hline
ball $12$ & $0$ & $351$ & $0.00$ & $0.64$ \\
\hline
ball $13$ & $0$ & $333$ & $0.00$ & $0.30$ \\
\hline
ball $14$ & $0$ & $335$ & $0.00$ & $0.98$ \\
\hline
ball $15$ & $144$ & $204$ & $0.41$ & $0.21$ \\
\hline
ball $16$ & $0$ & $322$ & $0.00$ & $0.57$ \\
\hline
ball $17$ & $0$ & $314$ & $0.00$ & $0.29$ \\
\hline
ball $18$ & $0$ & $369$ & $0.00$ & $0.98$ \\
\hline
ball $19$ & $142$ & $172$ & $0.45$ & $0.20$ \\
\hline
ball $20$ & $0$ & $321$ & $0.00$ & $0.67$ \\
\hline
ball $21$ & $0$ & $320$ & $0.00$ & $0.24$ \\
\hline
ball $22$ & $0$ & $325$ & $0.00$ & $0.43$ \\
\hline
ball $23$ & $0$ & $332$ & $0.00$ & $0.25$ \\
\hline
ball $24$ & $0$ & $321$ & $0.00$ & $0.57$ \\
\hline
ball $25$ & $375$ & $0$ & $1.00$ & $0.18$ \\
\hline
ball $26$ & $0$ & $330$ & $0.00$ & $0.40$ \\
\hline
ball $27$ & $0$ & $332$ & $0.00$ & $0.30$ \\
\hline
ball $28$ & $0$ & $317$ & $0.00$ & $0.55$ \\
\hline
ball $29$ & $0$ & $359$ & $0.00$ & $0.34$ \\
\hline
N & $978$ & $8921$ & $0.10$ & $0.62$ \\
\hline
\end{tabular}
\end{center}
\end{table}

\begin{table}[H]
\begin{center}
\caption{NRPCA ($B = 30 , r = 10)$}
\begin{tabular}{|c||c|c|c|c|}
\hline
\diagbox{I}{O} & A & N & $r_{\rA}$ & $r_{\rC}$ \\
\hline \hline
A & $269$ & $719$ & $0.27$ & $0.23$ \\
\hline
ball $0$ & $0$ & $300$ & $0.00$ & $0.63$ \\
\hline
ball $1$ & $1$ & $285$ & $0.00$ & $0.22$ \\
\hline
ball $2$ & $0$ & $314$ & $0.00$ & $0.63$ \\
\hline
ball $3$ & $0$ & $265$ & $0.00$ & $0.24$ \\
\hline
ball $4$ & $0$ & $322$ & $0.00$ & $0.54$ \\
\hline
ball $5$ & $0$ & $338$ & $0.00$ & $0.24$ \\
\hline
ball $6$ & $0$ & $310$ & $0.00$ & $0.54$ \\
\hline
ball $7$ & $63$ & $233$ & $0.21$ & $0.23$ \\
\hline
ball $8$ & $0$ & $270$ & $0.00$ & $0.45$ \\
\hline
ball $9$ & $0$ & $280$ & $0.00$ & $0.26$ \\
\hline
ball $10$ & $0$ & $303$ & $0.00$ & $0.57$ \\
\hline
ball $11$ & $4$ & $333$ & $0.01$ & $0.21$ \\
\hline
ball $12$ & $0$ & $304$ & $0.00$ & $0.50$ \\
\hline
ball $13$ & $0$ & $276$ & $0.00$ & $0.28$ \\
\hline
ball $14$ & $0$ & $311$ & $0.00$ & $0.53$ \\
\hline
ball $15$ & $14$ & $281$ & $0.05$ & $0.21$ \\
\hline
ball $16$ & $0$ & $317$ & $0.00$ & $0.62$ \\
\hline
ball $17$ & $9$ & $258$ & $0.03$ & $0.21$ \\
\hline
ball $18$ & $0$ & $302$ & $0.00$ & $0.47$ \\
\hline
ball $19$ & $0$ & $279$ & $0.00$ & $0.24$ \\
\hline
ball $20$ & $0$ & $286$ & $0.00$ & $0.49$ \\
\hline
ball $21$ & $0$ & $300$ & $0.00$ & $0.27$ \\
\hline
ball $22$ & $0$ & $303$ & $0.00$ & $0.52$ \\
\hline
ball $23$ & $296$ & $0$ & $1.00$ & $0.14$ \\
\hline
ball $24$ & $0$ & $326$ & $0.00$ & $0.65$ \\
\hline
ball $25$ & $9$ & $333$ & $0.03$ & $0.24$ \\
\hline
ball $26$ & $0$ & $277$ & $0.00$ & $0.52$ \\
\hline
ball $27$ & $0$ & $295$ & $0.00$ & $0.24$ \\
\hline
ball $28$ & $0$ & $298$ & $0.00$ & $0.47$ \\
\hline
ball $29$ & $0$ & $317$ & $0.00$ & $0.27$ \\
\hline
N & $396$ & $8616$ & $0.04$ & $0.51$ \\
\hline
\end{tabular}
\end{center}
\end{table}

\begin{table}[H]
\begin{center}
\caption{RPCA ($B = 30 , r = 10)$}
\begin{tabular}{|c||c|c|c|c|}
\hline
\diagbox{I}{O} & A & N & $r_{\rA}$ & $r_{\rC}$ \\
\hline \hline
A & $964$ & $24$ & $0.98$ & $0.13$ \\
\hline
ball $0$ & $0$ & $300$ & $0.00$ & $0.98$ \\
\hline
ball $1$ & $286$ & $0$ & $1.00$ & $0.13$ \\
\hline
ball $2$ & $0$ & $314$ & $0.00$ & $0.98$ \\
\hline
ball $3$ & $265$ & $0$ & $1.00$ & $0.14$ \\
\hline
ball $4$ & $0$ & $322$ & $0.00$ & $0.98$ \\
\hline
ball $5$ & $338$ & $0$ & $1.00$ & $0.13$ \\
\hline
ball $6$ & $0$ & $310$ & $0.00$ & $0.98$ \\
\hline
ball $7$ & $296$ & $0$ & $1.00$ & $0.12$ \\
\hline
ball $8$ & $0$ & $270$ & $0.00$ & $0.25$ \\
\hline
ball $9$ & $280$ & $0$ & $1.00$ & $0.15$ \\
\hline
ball $10$ & $0$ & $303$ & $0.00$ & $0.98$ \\
\hline
ball $11$ & $337$ & $0$ & $1.00$ & $0.16$ \\
\hline
ball $12$ & $0$ & $304$ & $0.00$ & $0.24$ \\
\hline
ball $13$ & $276$ & $0$ & $1.00$ & $0.18$ \\
\hline
ball $14$ & $0$ & $311$ & $0.00$ & $0.98$ \\
\hline
ball $15$ & $295$ & $0$ & $1.00$ & $0.07$ \\
\hline
ball $16$ & $0$ & $317$ & $0.00$ & $0.98$ \\
\hline
ball $17$ & $267$ & $0$ & $1.00$ & $0.13$ \\
\hline
ball $18$ & $0$ & $302$ & $0.00$ & $0.98$ \\
\hline
ball $19$ & $279$ & $0$ & $1.00$ & $0.04$ \\
\hline
ball $20$ & $0$ & $286$ & $0.00$ & $0.27$ \\
\hline
ball $21$ & $300$ & $0$ & $1.00$ & $0.13$ \\
\hline
ball $22$ & $0$ & $303$ & $0.00$ & $0.98$ \\
\hline
ball $23$ & $296$ & $0$ & $1.00$ & $0.08$ \\
\hline
ball $24$ & $0$ & $326$ & $0.00$ & $0.98$ \\
\hline
ball $25$ & $342$ & $0$ & $1.00$ & $0.19$ \\
\hline
ball $26$ & $0$ & $277$ & $0.00$ & $0.94$ \\
\hline
ball $27$ & $295$ & $0$ & $1.00$ & $0.18$ \\
\hline
ball $28$ & $0$ & $298$ & $0.00$ & $0.98$ \\
\hline
ball $29$ & $317$ & $0$ & $1.00$ & $0.13$ \\
\hline
N & $4469$ & $4543$ & $0.50$ & $0.76$ \\
\hline
\end{tabular}
\end{center}
\end{table}

\vs
In all cases, RPCA showed larger positive ratios. This matches natural intuition that iterated orthogonalisation in the pre-computation significantly decreases the loss. Although NRPCA did not perform well, it showed lower false positive ratios. Therefore it can be a candidate when we need to focus on false positives rather than true positives.

\vs
It is notable that RPCA performs very well for the cases $B < D_{-}$, while neither NRPCA nor RPCA performs well for the cases $B > D_{-}$. Since normal points lie in the $\Zp$-submodule of $\Zp^D$ of rank $B$ up to $p^2 \Zp^D$, it is natural that the dimensionality reduction to dimension $D_{-}$ does not work well when $B > D_{-}$.

\vs
The results for RPCA for the cases $B > D_{-}$ show that the deduced anomaly ratios for odd balls are significantly higher than those for even balls, and that the compress ratios for odd balls are significantly smaller than those for even balls. This implies that RPCA recognises even balls as more significant components than odd balls.

\vs
On the other hand, as RPCA correctly detected anomalous data for the cases $B < D_{-}$, RPCA recognises odd balls as more significant components than anomalous data, despite of the largeness of the $\ell^{\infty}$-norm. Thus, RPCA actually succeeds in tasks which dimensionality reduction based on matrix factorisation by Smith normal form is not theoretically capable of.

\subsection{Affine Subspace}
\label{Affine Subspace}

We consider the case where $S$ is a subset of a positive codimensional affine subspace $W \subset V$ with possible noise of $\ell^{\infty}$-norm $\leq \v{p}^2$, i.e.\ $S \subset \set{\vec{y} \in V}{\exists \vec{w} \in W[\vec{y} - \vec{w} \in p^2 \Zp^D]}$. We denote by $D'$ the dimension of $W$.

\vs
In order to generate $W$, it suffices to choose generic $D'$ vectors from $\Zp^D$. We label them by $\N_{\leq D'}$, and denote by $B \in (\Zp^D)^{D'+1}$ the $(D \times (D'+1))$-matrix over $\Zp$ given by the sequence of the labeled vectors. We set
\be
X \coloneqq \set{\vec{y} \in \Zp^D}{\exists \vec{w} \in \Zp^{D'} \times \ens{1} \left[ \vec{y} - B \vec{w} \in p^2 \Zp^D \right]}.
\ee
We generate $B$ by randomly choosing even rows from $\Zp^D$ and randomly choosing odd rows from $p \Zp^D$.

\vs
We exhibit experiment results in a table of the following type:

\begin{table}[H]
\begin{center}
\caption{Algorithm name (values of $D'$,$r$)}
\begin{tabular}{|c||c|c|c|c|}
\hline
\diagbox{I}{O} & A & N & $r_{\rA}$ & $r_{\rC}$ \\
\hline \hline
A & the number of & the number of & $r_{\rA}$ for & $r_{\rC}$ for \\
 & anomalous inputs & anomalous inputs & anomalous inputs & anomalous inputs \\
 & deduced to & deduced to & & \\
 & be anomalous & be normal & & \\
\hline
N & the number of & the number of & $r_{\rA}$ for & $r_{\rC}$ for \\
 & normal inputs & normal inputs & normal inputs & normal inputs \\
 & deduced to & deduced to & & \\
 & be anomalous & be normal & & \\
\hline
\end{tabular}
\end{center}
\end{table}

\vs
Here are results for $(D',r) = (10,1)$, $(10,10)$, $(30,1)$, and $(30,10)$:

\begin{table}[H]
\begin{center}
\caption{NRPCA ($D' = 10 , r = 1)$}
\begin{tabular}{|c||c|c|c|c|}
\hline
\diagbox{I}{O} & A & N & $r_{\rA}$ & $r_{\rC}$ \\
\hline \hline
A & $7$ & $80$ & $0.08$ & $0.25$ \\
\hline
N & $1766$ & $8147$ & $0.18$ & $0.24$ \\
\hline
\end{tabular}

\caption{RPCA ($D' = 10 , r = 1)$}
\begin{tabular}{|c||c|c|c|c|}
\hline
\diagbox{I}{O} & A & N & $r_{\rA}$ & $r_{\rC}$ \\
\hline \hline
A & $85$ & $2$ & $0.98$ & $0.14$ \\
\hline
N & $653$ & $9260$ & $0.07$ & $0.27$ \\
\hline
\end{tabular}
\end{center}
\end{table}

\begin{table}[H]
\begin{center}
\caption{NRPCA ($D' = 10 , r = 10)$}
\begin{tabular}{|c||c|c|c|c|}
\hline
\diagbox{I}{O} & A & N & $r_{\rA}$ & $r_{\rC}$ \\
\hline \hline
A & $79$ & $852$ & $0.08$ & $0.25$ \\
\hline
N & $1392$ & $7677$ & $0.15$ & $0.25$ \\
\hline
\end{tabular}

\caption{RPCA ($D' = 10 , r = 10)$}
\begin{tabular}{|c||c|c|c|c|}
\hline
\diagbox{I}{O} & A & N & $r_{\rA}$ & $r_{\rC}$ \\
\hline \hline
A & $921$ & $10$ & $0.99$ & $0.13$ \\
\hline
N & $1097$ & $7972$ & $0.12$ & $0.26$ \\
\hline
\end{tabular}
\end{center}
\end{table}

\begin{table}[H]
\begin{center}
\caption{NRPCA ($D' = 30 , r = 1)$}
\begin{tabular}{|c||c|c|c|c|}
\hline
\diagbox{I}{O} & A & N & $r_{\rA}$ & $r_{\rC}$ \\
\hline \hline
A & $7$ & $86$ & $0.08$ & $0.25$ \\
\hline
N & $1735$ & $8172$ & $0.18$ & $0.25$ \\
\hline
\end{tabular}

\caption{RPCA ($D' = 30 , r = 1)$}
\begin{tabular}{|c||c|c|c|c|}
\hline
\diagbox{I}{O} & A & N & $r_{\rA}$ & $r_{\rC}$ \\
\hline \hline
A & $92$ & $1$ & $0.99$ & $0.14$ \\
\hline
N & $1548$ & $8359$ & $0.16$ & $0.25$ \\
\hline
\end{tabular}
\end{center}
\end{table}

\begin{table}[H]
\begin{center}
\caption{NRPCA ($D' = 30 , r = 10)$}
\begin{tabular}{|c||c|c|c|c|}
\hline
\diagbox{I}{O} & A & N & $r_{\rA}$ & $r_{\rC}$ \\
\hline \hline
A & $84$ & $895$ & $0.09$ & $0.25$ \\
\hline
N & $1378$ & $7643$ & $0.15$ & $0.25$ \\
\hline
\end{tabular}

\caption{RPCA ($D' = 30 , r = 10)$}
\begin{tabular}{|c||c|c|c|c|}
\hline
\diagbox{I}{O} & A & N & $r_{\rA}$ & $r_{\rC}$ \\
\hline \hline
A & $940$ & $39$ & $0.96$ & $0.15$ \\
\hline
N & $1426$ & $7595$ & $0.16$ & $0.25$ \\
\hline
\end{tabular}
\end{center}
\end{table}

\vs
In all cases, RPCA showed very high true positive ratios, while there are not significant differences in false positive ratios.

\vs
It is notable that RPCA performs very well not only for the cases $D' < D_{-}$, but also for the cases $D' > D_{-}$. Since normal points lie in the $\Zp$-submodule of $\Zp^D$ of rank $D'$ up to possible noise in $p^2 \Zp^D$, it is difficult to achieve such performance by purely linear algebraic methods. For example, dimensionality reduction by on matrix factorisation based on Smith normal form or Gauss elimination over $\Fp$ does not work in these unsupervised settings, as the $\ell^{\infty}$-norms of anomalous points are greater than those of odd rows of $B$.

% \newpage
\vspace{0.3in}
\addcontentsline{toc}{section}{Acknowledgements}
\noindent {\Large \bf Acknowledgements}
\vspace{0.2in}

\noindent
I thank all people who helped me to learn mathematics and programming. I also thank my family.
%
%
%\vspace{0.3in}
%\addcontentsline{toc}{section}{Compliance with Ethical Standards}
%\noindent {\Large \bf Compliance with Ethical Standards}
%\vspace{0.2in}
%
%\begin{description}
%\item[Funding:] We declare that there exists no funding supporting us.
%\item[Conflict of Interest:] We declare that there exists no conflict of interest.
%\item[Author Contribution:] We declare that we substantially contributed to the entire study, including the conception and design of the study, acquisition of data if applicable, analysis and interpretation of data if applicable, drafting and revising the article critically for important intellectual content, and final approval of the version to be submitted.
%\item[Data Availability Statements:] We declare that we analysed no existing data set because our research proceeds a theoretic and mathematical approach. We simply input random numbers generated by ``randint'' method of ``random'' module in python standard library during the execution of an experiment code.
%\item[Duplicated Submission:] We declare that the manuscript, including related data if applicable, figures if applicable, and tables if applicable, is not and will not be under active consideration elsewhere for the duration of the review process, and has never been published in a journal or presented in a poster session. 
%\end{description}

\end{document}